\newtheorem{theorem}{Theorem}
\newtheorem{lemma}[theorem]{Lemma}
\newtheorem{proposition}[theorem]{Proposition}
\newtheorem{corollary}[theorem]{Corollary}
\theoremstyle{definition}
\newtheorem{definition}[theorem]{Definition}
\newtheorem{remark}[theorem]{Remark}
\newcommand{\NN}{\mathbb{N}}
\newcommand{\ZZ}{\mathbb{Z}}
\newcommand{\RR}{\mathbb{R}}
\newcommand{\CC}{\mathbb{C}}
\newcommand{\cont}{\mathcal{C}}
\newcommand{\bdry}{\partial}
\apptocmd{\sloppy}{\hbadness 10000\relax}{}{}
\title[Connectedness of the boundary of $q$-complete domains]{On the connectedness of the boundary of $q$-complete domains}
\author{Rafael B. Andrist}
\address{Faculty of Mathematics and Physics, 
University of Ljubljana, Ljubljana, Slovenia}
\email{rafael-benedikt.andrist@fmf.uni-lj.si}
\begin{document}

\begin{abstract}
The boundary of every relatively compact Stein domain in a complex manifold of dimension at least two is connected. No assumptions on the boundary regularity are necessary. The same proofs hold also for $q$-complete domains, and in the context of almost complex manifolds as well.
\end{abstract}

\keywords{pseudoconvex domain, ends of manifolds, Morse theory, almost complex manifolds}

\subjclass{Primary 32F27, Secondary 32F10, 32Q60}

\maketitle

\section{Introduction}
It seems to have been well-known to the experts in the 1980s that every bounded strictly pseudoconvex domain with $\cont^2$-smooth boundary in $\CC^n, n \geq 2,$ has connected boundary. In fact, already in 1953 Serre pointed out that every Stein manifold of dimension at least $2$ has only one end (see Section \ref{sec:ends}). For a relatively compact Stein domain that admits a collar, this would already imply connectedness of the boundary. 
However, to the best know\-ledge of the author, the earliest publication mentioning that every bounded strictly pseudoconvex domain with $\cont^2$-smooth boundary in $\CC^n, n \geq 2,$ has connected boundary, is due Rosay and Stout \cite{MR0964461}*{Corollary, p.~1018} in 1989 where they actually prove a stronger result. Again, the connectedness of the boundary was noted by Balogh and Bonk \cite{MR1793800}*{p.~513} under the same assumptions. 
In the monograph of Stout \cite{MR2305474}*{Corollary 2.4.7} it was established that every relatively compact, strictly pseudoconvex domain with $\cont^2$-smooth boundary in a Stein manifold of dimension at least two has connected boundary. The proof given there follows from a theorem of Forstneri\v{c} \cite{MR1278436} about complements of Runge domains.

\smallskip

The almost complex case, but with a $\cont^\infty$-smooth defining $J$-pluri\-sub\-har\-monic function on a neighborhood of the closure of the relatively compact domain is treated by Bertrand and Gaussier \cite{MR3359581}.

\smallskip

For the special case of bounded pseudoconvex domains in $\CC^n$, a proof without any assumptions on the boundary regularity can be found in the second edition of the textbook of Jarnicki and Pflug \cite{MR4201928}*{Corollary 2.6.10}. It relies mainly on a topological argument that was provided by Czarnecki, Kulczycki and Lubawski \cite{MR2855300} with an elementary proof: For a bounded domain in $\RR^n$ the connectedness of its complement is equivalent to the connectedness of its boundary. A very similar proof is given by Izzo \cite{Izzo} who uses an elegant homology argument that relies only on $H_1(\RR^n, \ZZ) = 0$ to obtain the above-mentioned topological fact.

\smallskip

We will prove the natural generalization of these results without any assumption on boundary regularity and on the ambient space.

\begin{definition}[\cite{MR3700709}*{Definition 3.1.3}]
Let $X$ be a complex mani\-fold. We say that a domain $\Omega \subset X$ is \emph{$q$-complete}, if there exists a $\cont^2$-smooth exhaustion function $\varphi \colon \Omega \to [0, +\infty)$ that is $q$-convex, i.e.\ if its Levi form has at most $q-1$ negative or zero eigenvalues at each point of $\Omega$.
\end{definition}

\begin{theorem}
\label{thm1}
Let $X$ be a complex manifold with $\dim_\CC X = n$ and let $\Omega \subset X$ be a relatively compact $q$-complete domain with $n > q$. Then the boundary $\bdry \Omega$ is connected.
\end{theorem}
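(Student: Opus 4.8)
The plan is to split the proof into two essentially independent parts: first, that $\Omega$ has exactly one topological end; second, that a non-compact relatively compact domain with one end has connected boundary — the latter by a purely point-set argument, using no complex geometry and no boundary regularity. For the first part I would begin by reducing to a Morse situation: the requirement that the Levi form of $\varphi$ have at least $n-q+1$ positive eigenvalues is an open condition in the $\cont^2$-topology, and properness is preserved under bounded perturbations, so a sufficiently small generic perturbation of $\varphi$ is still a $q$-convex exhaustion and is, in addition, a Morse function; hence I may assume $\varphi$ is Morse.

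Next I would bound the Morse indices of $\varphi$ via the Andreotti--Frankel argument. At a critical point $p$ one can choose holomorphic coordinates in which $\varphi(z)=\varphi(p)+\operatorname{Re}(Q(z))+L(z,\bar z)+o(|z|^2)$ with $Q$ a holomorphic quadratic form and $L$ the Levi form at $p$; the real Hessian of $\varphi$ at $p$ is then the quadratic form $H(z)=\operatorname{Re}(Q(z))+L(z,\bar z)$, which satisfies $H(z)+H(iz)=2L(z,\bar z)$. Since $\varphi$ is $q$-convex, $L$ is positive definite on a complex subspace $P$ with $\dim_\CC P=n-q+1$. If $V$ denotes the negative eigenspace of $H$, the identity forces $H>0$ on $i(V\cap P)\subseteq P$, so $V\cap P$ meets $i(V\cap P)$ only in $0$ and hence $\dim_\RR(V\cap P)\le n-q+1$; therefore the Morse index $\dim_\RR V\le(n-q+1)+\dim_\RR(\CC^n/P)=(n-q+1)+(2q-2)=n+q-1$, which by the hypothesis $n>q$ is at most $2n-2=\dim_\RR\Omega-2$.

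I would then run Morse theory on the super-level sets $\{\varphi\ge c\}$, which (for regular $c$) form a cofinal family of complements of compact sets in $\Omega$. Since $\varphi$ is an exhaustion, for regular values $c_1<c_2$ the compact cobordism $\varphi^{-1}([c_1,c_2])$ contains only finitely many critical points, and $\{\varphi\ge c_1\}$ is obtained from $\{\varphi\ge c_2\}$ by attaching, for each critical point of index $\lambda$ with critical value in $(c_1,c_2)$, a handle of index $2n-\lambda$. By the index bound every such handle has index at least $2$, so its attaching region $S^{2n-\lambda-1}\times D^{\lambda}$ is connected and the inclusion induces a bijection on $\pi_0$. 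Hence $\{\varphi\ge c\}$ has the same (finite) number of connected components for every regular value $c$, and this number is the number of ends of $\Omega$. Taking $c$ slightly above $\min\varphi$, the set $\{\varphi\ge c\}$ is $\Omega$ with finitely many disjoint open coordinate balls (neighborhoods of the finitely many minima) removed, which is connected because $\Omega$ is connected of real dimension at least $4$. So $\Omega$ has exactly one end.

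For the second part I would argue by contradiction: suppose $\bdry\Omega=A\sqcup B$ with $A,B$ nonempty, compact and disjoint. Choosing disjoint open neighborhoods $U_A\supset A$, $U_B\supset B$ in $X$, the set $K:=\bar\Omega\setminus(U_A\cup U_B)$ is compact and contained in $\Omega$ (because $\bdry\Omega\subset U_A\cup U_B$), and $\Omega\setminus K=(\Omega\cap U_A)\sqcup(\Omega\cap U_B)$ is a disjoint union of two nonempty open sets, neither of which is relatively compact in $\Omega$, since the closure of $\Omega\cap U_A$ in $X$ meets $A\subset X\setminus\Omega$ and similarly for $B$. This exhibits at least two ends of $\Omega$, contradicting the first part; hence $\bdry\Omega$ is connected (the case $\bdry\Omega=\emptyset$, i.e.\ $\Omega$ compact, being excluded). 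I expect the main obstacle to lie in the careful execution of the first part — arranging that the Morse perturbation remain simultaneously $q$-convex, proper and Morse, and making the super-level-set handle bookkeeping precise. As an alternative to explicit Morse theory, one can invoke that a $q$-complete $n$-dimensional complex manifold has the homotopy type of a CW complex of dimension at most $n+q-1$; since $n>q$ this yields $H_{2n-1}(\Omega;\ZZ)=0$, hence $H^1_c(\Omega;\ZZ)\cong H_{2n-1}(\Omega;\ZZ)=0$ by Poincaré duality, which again forces $\Omega$ to have a single end.
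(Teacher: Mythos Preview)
Your proposal is correct and follows essentially the same strategy as the paper: perturb the $q$-convex exhaustion to a Morse function, use the index bound $\lambda\le n+q-1<2n-1$ to conclude that $\Omega$ has one end, and then pass from one-endedness to connectedness of $\bdry\Omega$ by a purely point-set argument. Your write-up is more self-contained---you give the Andreotti--Frankel computation explicitly and argue the topological step by a direct two-ends contradiction, whereas the paper cites the index bound and appeals to a decreasing-continua lemma---but the architecture is the same.
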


By a result of Grauert \cite{MR0098847} $\Omega$ is Stein if and only if it is $1$-complete, and thus we obtain the following corollary.

\begin{corollary}
Let $X$ be a complex manifold with $\dim_\CC X = n > 1$ and let $\Omega \subset X$ be a relatively compact Stein domain. Then the boundary $\bdry \Omega$ is connected.
\end{corollary}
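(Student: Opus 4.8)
Since the corollary follows at once from Theorem~\ref{thm1} together with Grauert's solution of the Levi problem \cite{MR0098847} (a relatively compact Stein domain is $1$-complete, so for $q=1$ the hypothesis $n>q$ just says $n>1$), the substantive task is the proof of Theorem~\ref{thm1}. The plan is to run Morse theory on the $q$-convex exhaustion and then finish with an elementary compactness argument, so that no boundary regularity and no collar neighborhood are ever used.

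The first step is to arrange that the given $q$-convex exhaustion $\varphi\colon\Omega\to[0,+\infty)$ is a Morse function. The condition that the Levi form have at least $n-q+1$ positive eigenvalues is open on the $2$-jet, and properness survives small perturbations, so by a standard local perturbation argument $\varphi$ may be replaced by a $q$-convex Morse exhaustion; the only subtlety is that, although the critical values may accumulate at $\bdry\Omega$, only finitely many of them lie in any bounded interval, since $\{\varphi\le c\}$ is compact for every $c$. The classical linear-algebra fact that at a critical point the real Hessian of $\varphi$ has at least as many positive eigenvalues as the Levi form then shows that every Morse index is at most $n+q-1$.

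The crux is dual Morse theory for the superlevel sets $\{\varphi\ge c\}$. As $c$ decreases through a critical value whose critical point has index $\lambda\le n+q-1$, this set changes, up to homotopy equivalence, by the attachment of a cell of dimension $2n-\lambda\ge 2n-(n+q-1)=n-q+1\ge 2$, and this is exactly where the hypothesis $n>q$ enters. Attaching cells of dimension at least $2$ leaves $\pi_0$ unchanged, and between consecutive critical values the superlevel set changes only by a homeomorphism, so $\pi_0(\{\varphi\ge c\})$ is independent of $c\ge 0$; since $\{\varphi\ge 0\}=\Omega$ is connected, $\{\varphi\ge c\}$ is connected for every $c$ --- equivalently, $\Omega$ has exactly one end, which in the Stein case recovers Serre's observation. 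To conclude, note that since $\varphi$ is an exhaustion every point of $\bdry\Omega$ is a limit of points of $\Omega$ with arbitrarily large $\varphi$-value, so that $\bdry\Omega=\bigcap_{c\ge 0}\overline{\{\varphi\ge c\}}$ with the closures taken in $X$. Each set $\overline{\{\varphi\ge c\}}$ is compact, being closed in the compact set $\overline{\Omega}$, and connected, being the closure of a connected set, and these sets are nested; since a nested intersection of compact connected subsets of a Hausdorff space is connected, $\bdry\Omega$ is connected.

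I expect the real obstacle to be the first step --- producing an honest Morse $q$-convex exhaustion and, above all, running Morse theory cleanly on a noncompact manifold whose critical values may pile up against the boundary --- where properness of $\varphi$ is what keeps the combinatorics below each level finite and makes everything work. The index estimate is a routine computation in linear algebra, and the closing point-set step is elementary, although it must be stated with some care (the identity $\bdry\Omega=\bigcap_c\overline{\{\varphi\ge c\}}$ and the lemma on nested compact connected sets).
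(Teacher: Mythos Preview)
Your argument is correct and matches the paper's: reduce to Theorem~\ref{thm1} via Grauert, perturb to a Morse $q$-convex exhaustion, use the index bound $\lambda\le n+q-1$ to see that superlevel sets (equivalently, complements of sublevel sets) stay connected, and finish with a nested-continuum argument. The only cosmetic difference is that the paper phrases the last step through the language of ends (Lemma~\ref{lem-exhaust} and Proposition~\ref{prop-connected}) while you apply the compact--connected--intersection lemma directly to the sets $\overline{\{\varphi\ge c\}}$---an alternative the paper itself points out in a remark following the proof.
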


Since a domain $\Omega \subset X$ in a Stein manifold $X$ is a domain of holomorphy if and only if it is pseudoconvex, we also arrive at the next corollary.

\begin{corollary}
Let $X$ be a Stein manifold with $\dim_\CC X = n > 1$ and let $\Omega \subset X$ be a relatively compact domain of holomorphy. Then the boundary $\bdry \Omega$ is connected.
\end{corollary}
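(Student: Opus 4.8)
The last corollary is a formality once Theorem~\ref{thm1} is available: in a Stein manifold a domain of holomorphy is exactly a pseudoconvex domain, and by Grauert's theorem a pseudoconvex (equivalently Stein) domain is $1$-complete, so with $q = 1 < n$ Theorem~\ref{thm1} applies and yields that $\bdry\Omega$ is connected. The real content is Theorem~\ref{thm1}, and the plan is to combine a Morse-theoretic bound on the topology of $\Omega$ with a duality argument that converts it into connectedness of the boundary.

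\emph{Step 1 (Morse theory).} I would begin from the $q$-convex $\cont^2$ exhaustion $\varphi\colon\Omega\to[0,+\infty)$ supplied by the definition of $q$-completeness and invoke the $q$-convex refinement of the theorem of Andreotti--Frankel: $\Omega$ has the homotopy type of a CW complex of real dimension at most $n+q-1$. The crux is the bound on the Morse index of $\varphi$ at a nondegenerate critical point $p$: if $E\subset T_p\Omega$ is a subspace carrying a negative-definite real Hessian, then $E\cap JE$ is a $J$-invariant subspace on which the Levi form is negative definite, whence $\dim_\RR(E\cap JE)\le 2(q-1)$, while $w\mapsto[Jw]$ embeds $E/(E\cap JE)$ into $T_p\Omega/E$; together these give $\dim_\RR E\le n+q-1$ (for $q=1$ this is the classical Andreotti--Frankel bound). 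Since $q<n$ we have $n+q-1\le 2n-2$, hence $H_{2n-1}(\Omega;\RR)=0$. Moreover $\Omega$ is noncompact, for at a maximum of $\varphi$ the Levi form would be negative semidefinite, forcing $n\le q-1$; thus $\bdry\Omega\ne\emptyset$, while $\bar\Omega$ is compact and connected.

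\emph{Step 2 (duality).} It now suffices to show $\check H^0(\bdry\Omega;\RR)\cong\RR$, which for the compact Hausdorff space $\bdry\Omega$ is equivalent to connectedness. I would run the long exact sequence in \v{C}ech cohomology of the pair $(\bar\Omega,\bdry\Omega)$, using the standard identification $\check H^\bullet(\bar\Omega,\bdry\Omega;\RR)\cong H^\bullet_c(\Omega;\RR)$ of the cohomology of a compact pair with the compactly supported cohomology of the open complement:
\[
H^0_c(\Omega;\RR)\longrightarrow\check H^0(\bar\Omega;\RR)\longrightarrow\check H^0(\bdry\Omega;\RR)\longrightarrow H^1_c(\Omega;\RR).
\]
Here $H^0_c(\Omega;\RR)=0$ since $\Omega$ is connected and noncompact, $\check H^0(\bar\Omega;\RR)=\RR$ since $\bar\Omega$ is connected and compact, and $H^1_c(\Omega;\RR)\cong H_{2n-1}(\Omega;\RR)=0$ by Poincar\'e duality on the oriented $2n$-manifold $\Omega$ together with Step 1. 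Exactness forces $\check H^0(\bdry\Omega;\RR)\cong\RR$, so $\bdry\Omega$ is connected, and Theorem~\ref{thm1} (hence the corollary) follows.

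The delicate point I expect to be Step 2 rather than Step 1. Because no regularity of $\bdry\Omega$ is assumed, the boundary can be arbitrarily wild and there is no collar or handle structure to exploit; all of the topology must therefore be carried out in \v{C}ech (or Alexander--Spanier) cohomology, and the identification $\check H^\bullet(\bar\Omega,\bdry\Omega)\cong H^\bullet_c(\Omega)$ — the one place where relative compactness genuinely enters — has to be invoked with care. Conceptually Step 1 says $H^1_c(\Omega)=0$, i.e.\ that $\Omega$ has at most one end, which is the form in which Serre's observation appears; the exact sequence of Step 2 is exactly the device that upgrades ``at most one end'' to ``connected boundary'' for a relatively compact domain. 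The Morse input of Step 1 is by now routine, so the real work lies in assembling the topological side correctly.
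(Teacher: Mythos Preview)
Your argument is correct, and the reduction of the corollary to Theorem~\ref{thm1} is exactly how the paper proceeds. For Theorem~\ref{thm1} itself, however, you take a genuinely different route in Step~2. Both the paper and you start from the same Morse-theoretic input---the index bound $\le n+q-1$ at critical points of a (perturbed) $q$-convex exhaustion---but the paper then argues \emph{geometrically}: attaching handles of index $\le 2n-2$ never disconnects the superlevel sets, so $\Omega$ has a compact exhaustion with connected complements, i.e.\ one end, and a nested-continua lemma (the intersection of a decreasing sequence of continua is a continuum) converts this directly into connectedness of $\bdry\Omega$. You instead argue \emph{cohomologically}: the CW bound gives $H_{2n-1}(\Omega;\RR)=0$, Poincar\'e duality turns this into $H^1_c(\Omega;\RR)=0$, and the long exact sequence of $(\bar\Omega,\bdry\Omega)$ together with $\check H^\bullet(\bar\Omega,\bdry\Omega)\cong H^\bullet_c(\Omega)$ yields $\check H^0(\bdry\Omega;\RR)\cong\RR$.

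What each buys: the paper's route is elementary---no duality theorems, only point-set topology and the handle picture---and makes the mechanism (``one end'') visually transparent. Your route packages everything into standard algebraic-topology theorems and is closer in spirit to Serre's original cohomological remark, which the paper cites but does not use in its proof; the price is invoking the identification of relative \v{C}ech cohomology of a compact pair with compactly supported cohomology of the open complement, which (as you correctly flag) is the one place needing care. Both handle the absence of boundary regularity equally well: you via \v{C}ech cohomology, the paper via continua.
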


The proof also extends to the almost complex situation.

\begin{theorem}
\label{thm2}
Let $(X,J)$ be an almost complex manifold with $\dim_\RR X \geq 4$, and let $\Omega \subset X$ be a relatively compact domain with a $\cont^2$-smooth strictly $J$-plurisubharmonic exhaustion function. Then the boundary $\bdry \Omega$ is connected.
\end{theorem}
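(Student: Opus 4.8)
The plan is to run the proof of Theorem \ref{thm1} in the almost complex category, replacing the complex-analytic input by its $J$-version. Write $m=\dim_\RR X$, so $n:=m/2\geq 2$. We may assume $X$ is connected (a domain lies in a single component, and the boundary is unchanged). If $\bdry\Omega=\emptyset$ then $\Omega$ is open and closed in $X$, hence $\Omega=X$ is compact; but a compact almost complex manifold carries no strictly $J$-plurisubharmonic function, since at an interior maximum $p$ one has $d\varphi_p=0$ and $\operatorname{Hess}_p\varphi\leq 0$, forcing the Levi form $\mathcal{L}_p\varphi=\tfrac12(\operatorname{Hess}_p\varphi(\,\cdot\,,\cdot)+\operatorname{Hess}_p\varphi(J\cdot,J\cdot))\leq 0$. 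So $\bdry\Omega\neq\emptyset$. By the discussion of ends in Section \ref{sec:ends}, for a relatively compact domain the number of connected components of $\bdry\Omega$ equals the number of ends of $\Omega$; since $\Omega$ is connected it therefore suffices to show that $\Omega$ has exactly one end.

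Let $\varphi\colon\Omega\to[0,\infty)$ be the given strictly $J$-plurisubharmonic $\cont^2$ exhaustion. After a small $\cont^2$-perturbation I may assume in addition that $\varphi$ is a proper Morse function with pairwise distinct critical values, still strictly $J$-plurisubharmonic: strict $J$-plurisubharmonicity is a $\cont^2$-open condition, Morse functions are generic, and one only has to be careful to carry out the perturbation globally on the non-compact manifold $\Omega$ — this is exactly the kind of approximation performed by Bertrand and Gaussier \cite{MR3359581} and I would simply invoke it. Now I bound the Morse indices. At a critical point $p$ the real Hessian $\operatorname{Hess}_p\varphi$ is coordinate-independent, and because $d\varphi_p=0$ the first-order terms — in particular the contribution of the Nijenhuis tensor of $J$ — drop out of the identity relating the real Hessian and the Levi form, so that $\operatorname{Hess}_p\varphi(v,v)+\operatorname{Hess}_p\varphi(Jv,Jv)=2\,\mathcal{L}_p\varphi(v,v)>0$ for every $v\neq 0$. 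Hence the negative eigenspace $E_p^-$ of $\operatorname{Hess}_p\varphi$ contains no nonzero $J$-complex subspace, i.e.\ $E_p^-\cap JE_p^-=\{0\}$, so $\dim_\RR E_p^-\leq n$; every critical point of $\varphi$ has Morse index at most $n$.

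It remains to extract the topology. Apply Morse theory to $-\varphi$: each critical point has index $m-\lambda\geq m-n=n\geq 2$. For a regular value $c$ above all critical values, $\{\varphi\geq c\}$ is diffeomorphic to $\{\varphi=c\}\times[0,\infty)$ via the gradient flow, which is complete in the direction of increasing $\varphi$ since $\varphi$ is proper with no critical points there. As the threshold for $-\varphi$ increases, the superlevel sets $\{\varphi\geq c\}$ are obtained from one another by attaching handles of index $\geq 2$; such a handle is attached along a connected sphere and its core is connected, so each attachment induces a bijection on $\pi_0$. Running this down through all critical values to some $c<0$, where $\{\varphi>c\}=\Omega$, shows that every bonding map in the inverse system $\varprojlim_c\pi_0(\{\varphi>c\})$ is a bijection and the limit equals $\pi_0(\Omega)$, a single point. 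Thus $\Omega$ has one end, and $\bdry\Omega$ is connected.

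The only place the almost complex hypothesis really enters is the index estimate of the second paragraph, so the main point to check carefully is that the pointwise identity relating $\operatorname{Hess}_p\varphi$ to $\mathcal{L}_p\varphi$ genuinely survives the non-integrability of $J$ at a critical point; once that is in hand, the argument is the same differential topology as in the proof of Theorem \ref{thm1}, and the hypothesis $\dim_\RR X\geq 4$ is used precisely to guarantee $n\geq 2$, which is what keeps all the dual handles of index $\geq 2$.
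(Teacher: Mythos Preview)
Your proof is correct and takes essentially the same approach as the paper: perturb $\varphi$ to a strictly $J$-plurisubharmonic Morse exhaustion, bound the Morse index by $n$, and conclude that $\Omega$ has one end so that Proposition~\ref{prop-connected} applies. The only differences are that you prove the index bound directly (your totally-real-negative-eigenspace argument is exactly the content of the paper's citation \cite{MR3012475}*{Corollary~3.4}) and that you slightly overstate Section~\ref{sec:ends} when equating the number of ends with the number of boundary components---only the direction you actually need (one end $\Rightarrow$ connected boundary, Proposition~\ref{prop-connected}) is established there without a collar assumption.
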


\begin{remark} The result is sharp in the following sense:
\begin{enumerate}
\item In one dimension, every bounded domain of $\CC$ is pseudoconvex --- and in fact a domain of holomorphy, but obviously does not need to have connected boundary. If we assume in addition that the domain is simply connected, its boundary will be connected, but in general not path-connected. 
\item Unbounded domains, even when simply connected, do not need to have a connected complement, e.g.\ an infinite strip in $\CC$. By taking direct products, this yields also counterexamples in higher dimensions.
\end{enumerate}
\end{remark}

\begin{remark}
For symplectic manifolds with contact type boundaries, the boundary does not need to be connected, see McDuff \cite{MR1091622} and Geiges \cite{MR1328705}.
\end{remark}

This short note is organized as follows:
For completeness, in Section \ref{sec:classical} we first give the proof for relatively compact, strictly pseudoconvex domains with $\cont^2$-smooth boundary in a Stein manifold. We could not find a reference for this proof which was communicated by Franc Forstneri\v{c}.

In Section \ref{sec:ends} we provide the topological background for the theory of ends and of continua. In Section \ref{sec:psc} we give the proofs for the general situation.

\section{A Classical Proof for Smooth Boundary}

\label{sec:classical}

The following proof for the situation where $\Omega$ is a relatively compact, strongly pseudoconvex domain with $\cont^2$-smooth boundary in a Stein manifold was communicated by Franc Forstneri\v{c}. This is likely the classical proof that was known at least since the 1980s:

\smallskip

Consider a relatively compact domain $\Omega= \{ \rho < c \}$ in an $n$-di\-men\-sion\-al Stein manifold $X$, where $n > 1$ and $\rho$ is a strongly plurisubharmonic Morse function on a neighborhood of $\overline{\Omega}$, with $d\rho \ne 0$ on $\bdry \Omega$. Let $\Omega_t = \{\rho \le t \}$ for $t \le c$. The topology of $\Omega_t$ and $\bdry \Omega_t$ only changes when $t$ passes a critical level set of $\rho$. (The local normal form of a strongly plurisubharmonic Morse function at an isolated critical point is known in principle since 1924 due to a result of Takagi \cite{zbMATH02595788} which was recovered again by Schur \cite{zbMATH03097391} and by Harvey and Wells \cite{MR0330510}.) When passing a local minimum, a new connected component of $\Omega_t$ appears, which is not a concern. The only other type of points which can change connectivity of $\Omega_t$ is a critical point of index $1$. At such a point, we add a $1$-handle to $\Omega_t$. There are two possibilities -- either this handle is attached with both ends to the same component of $\Omega_t$, or it joins two distinct components. In both cases we see by inspection that the boundary of any connected component remains connected.

Handles of higher index up to $2n-2$ do not change connectivity of the domain or its boundary: 
The core of the handle of index $k$ is a $k$-disc which is attached with its boundary $(k-1)$-sphere to $\bdry \Omega_t$. Removing a submanifold of real codimension $\ge 2$ from a manifold does not disconnect the manifold: In our case, we apply this argument to the boundary sphere of the $k$-disc as a submanifold of $\bdry \Omega_t$. 

The connectivity of $\bdry \Omega_t$ would change by attaching a handle of index $2n-1$ (i.e.\ of real codimension $1$), but such is not allowed since $\rho$ is plurisubharmonic and $n \geq 2$.

\section{Ends of topological spaces}
\label{sec:ends}

The definition of \emph{ends} of a topological space goes back to Freudenthal \cite{MR1545233} and has led to a well developed theory, see the textbook of Hughes and Ranicki \cite{MR1410261}.

\begin{definition} \hfill
\begin{enumerate}
\item A \emph{neighborhood of an end} in a non-compact topological space $X$ is a subspace $U \subset X$ which contains a component of $X \setminus K$ for a non-empty compact subspace $K \subset X$.
\item An \emph{end} $e$ of $X$ is an equivalence class of sequences of connected open neighborhoods $X \supset U_1 \supset U_2 \supset \dots$ such that
\[
\bigcap_{i=1}^{\infty} U_i = \emptyset
\]
subject to the equivalence relation
\[
(X \supset U_1 \supset U_2 \supset \dots ) \sim (X \supset V_1 \supset V_2 \supset \dots)
\]
if for each $U_i$ there exists $j$ with $U_i \subseteq V_j$, and for each $V_j$ there exists $i$ with $V_j \subseteq U_i$.
\end{enumerate}
\end{definition}

\begin{remark}[\cite{MR1410261}*{Example~3~(i)}]
\label{rem-exhaust}
Let $X$ be a topological space with a proper map $\varphi \colon X \to [0, +\infty)$ which is onto, and such that the inverse images $U_t = \varphi^{-1}(t, +\infty) \subseteq X$, $t \geq 1$ are connected. Then $X$ has one end.
\end{remark}

\begin{lemma}
\label{lem-exhaust}
Let $X$ be a locally compact Hausdorff space with a countable basis of topology. Then $X$ has one end if and only if there exists an exhaustion by compacts $(K_j)_{j \in \NN}$ of $X$ such that $X \setminus K_j$ is connected for every $j \in \NN$.
\end{lemma}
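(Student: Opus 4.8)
The plan is to prove both implications directly from the definition of an end, using a compact exhaustion to build the equivalence classes of neighborhoods required. For the "if" direction, suppose $(K_j)_{j\in\NN}$ is a compact exhaustion with each $X\setminus K_j$ connected; we may also assume $K_j\subset \mathrm{int}(K_{j+1})$ and $\bigcup_j K_j=X$ by local compactness. Set $U_j=X\setminus K_j$. Since $X$ is not compact each $U_j$ is a nonempty open connected neighborhood of an end (it is a component of $X\setminus K_j$, namely all of it), and $\bigcap_j U_j=X\setminus\bigcup_j K_j=\emptyset$, so this descending chain represents an end $e$. To see $e$ is the unique end, let $(V_i)_i$ be any sequence of connected open neighborhoods with empty intersection; I would show $(V_i)\sim(U_j)$. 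First, each $K_j$ is compact and $\bigcap_i V_i=\emptyset$, so by the descending-chain property applied to the closed sets $K_j\cap \overline{V_i}$ — more carefully, since $X$ is a $\sigma$-compact locally compact Hausdorff space the $V_i$ eventually leave every compact set: for each $j$ there is $i$ with $V_i\subseteq X\setminus K_j=U_j$, using that the complements $X\setminus V_i$ form an increasing family of sets whose union covers $X$ — actually the cleanest route is: because each $V_i$ is a neighborhood of an end it contains a component of $X\setminus K$ for some compact $K$; iterate to extract the inclusion into $U_j$. Conversely, given $U_j=X\setminus K_j$, since $(V_i)$ has empty intersection and the $K_j$ exhaust, one of the $V_i$ must be disjoint from... no — rather, $U_j$ is connected and I claim $U_j\subseteq V_i$ for suitable $i$: pick $i$ so large that $X\setminus V_i\supseteq$ something — this is the delicate point, addressed below.

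For the "only if" direction, suppose $X$ has a unique end $e$, represented by some chain $U_1\supseteq U_2\supseteq\cdots$ of connected open sets with empty intersection. Using that $X$ is locally compact, Hausdorff, second countable, hence $\sigma$-compact and metrizable, fix any compact exhaustion $(L_j)$. For each $j$, consider the components of $X\setminus L_j$. At most finitely many of them are "unbounded" (i.e. noncompact closure / relatively noncompact) — standard for locally compact $\sigma$-compact spaces. Each noncompact component determines at least one end; since there is only one end, for $j$ large there is exactly one noncompact component $W_j$ of $X\setminus L_j$, and moreover the finitely many compact components can be absorbed by enlarging $L_j$ to $K_j:=X\setminus W_j$, which is compact. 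Then $X\setminus K_j=W_j$ is connected, and the $K_j$ still form an exhaustion by compacts. The bookkeeping that "only one noncompact component survives for large $j$" is what uses uniqueness of the end, via the fact that distinct noncompact components persisting cofinally would yield inequivalent neighborhood chains.

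The main obstacle I expect is making rigorous the two "swallowing" statements: (a) that in a $\sigma$-compact locally compact Hausdorff space, $X\setminus K$ has only finitely many noncompact components for each compact $K$, and that these are in bijection (for cofinal $K$) with the ends; and (b) the direction showing $U_j\subseteq V_i$ for large $i$ in the uniqueness argument, i.e. that an arbitrary end-neighborhood chain with empty intersection is cofinally inside the complements of a given compact exhaustion. Both are essentially the standard identification of the space of ends with an inverse limit $\varprojlim \pi_0(X\setminus K_j)$ over compact $K_j$, restricted to the noncompact components. I would either cite this from Hughes–Ranicki \cite{MR1410261} or spell it out: second countability guarantees we can work with a fixed exhaustion, local compactness guarantees the components of complements of compacta are open and that "noncompact" is the right notion, and Hausdorffness guarantees compact components can be separated off and absorbed into a larger compactum. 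Once the identification with $\varprojlim \pi_0$ is in place, "one end" $\Leftrightarrow$ "this inverse limit is a single point" $\Leftrightarrow$ "for a cofinal system of compacta the complement is connected," which is exactly the claim.
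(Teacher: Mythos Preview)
Your plan is correct and rests on the same mechanism as the paper: ends correspond to unbounded components of complements of compacta, and relatively compact components get absorbed into the exhausting compacts. The packaging differs. For the ``if'' direction the paper simply declares it straightforward and defers to Remark~\ref{rem-exhaust} (i.e.\ Hughes--Ranicki), whereas you try to verify $(V_i)\sim(U_j)$ by hand and rightly isolate the nontrivial inclusion; your proposed fallback to the inverse-limit identification $\varprojlim\pi_0(X\setminus K_j)$ is exactly how one makes this clean. For the ``only if'' direction the paper argues by contradiction: start from an arbitrary exhaustion, pass to a subsequence where every complement is disconnected, absorb components that are contained in a later $K_\ell$, and then build two inequivalent descending chains of components to exhibit two ends. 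Your direct version (one end $\Rightarrow$ a unique unbounded component $W_j$, set $K_j:=X\setminus W_j$) is the contrapositive of this and arguably more transparent.

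One caution: your assertion that $X\setminus L_j$ has only finitely many unbounded components, and only finitely many bounded ones to absorb, is not a general fact for locally compact $\sigma$-compact Hausdorff spaces; it uses local connectedness (so components are open) together with connectedness of $X$, which lets you cover the compact frontier $L_m\setminus\mathrm{int}(L_m)$ by finitely many such components and conclude that all the remaining ones sit inside $L_m$. The paper glosses over the same point when it says a bounded component ``could have included \dots\ already in the compact $K_j$.'' Since the application is to manifolds this is harmless, but state the extra hypothesis rather than calling it standard.
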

\begin{proof}
The ``if'' part is straightforward, see also Remark \ref{rem-exhaust}. We only need to provide a proof for the ``only if'' part: Assume to get a contradiction that no exhaustion by compacts $(K_j)_j$ of $X$ exists such that $X \setminus K_j$ is connected for every $j \in \NN$, but that $X$ has only one end. We can always pass to a subsequence, and thus we may assume that $X \setminus K_j$ always has at least two connected components. Moreover, we can assume that none of these components is contained in a compact $K_{\ell(j)}, \ell > j,$ for otherwise we could have included this component already in the compact $K_j$. 
Now we pick a connected component $U_1$ of $X \setminus K_1$. For every $j \geq 2$ we a pick a connected component of $X \setminus K_j$ that is contained in $U_{j-1}$. This is always possible, since we eliminated superficial connected components by our choice of compacts $K_j$. This sequence of neighborhoods $(U_j)_j$ defines an end of $X$.
Since $X \setminus K_1$ has at least two connected components, we choose another sequence of neighborhoods  $(V_j)_j$ such that $U_j$ and $V_j$ are different connected components of $X \setminus K_j$ for every $j \in \NN$, and hence define two different ends.
\end{proof}

\begin{definition}
A \emph{continuum} is a non-empty, compact, connected metric space.
\end{definition}

\begin{lemma}[\cite{MR1192552}*{Theorem 1.8}]
\label{lem-continuum}
The intersection of a decreasing sequence of continua is a continuum.
\end{lemma}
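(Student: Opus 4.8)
The plan is to work entirely inside the compact metric space $C_1$, of which every term $C_n$ of the decreasing sequence $C_1 \supseteq C_2 \supseteq \cdots$ and the intersection $C := \bigcap_{n} C_n$ are subspaces, and to check the four defining properties of a continuum in turn. Three of them are essentially formal: $C$ is metrizable simply as a subspace of the metric space $C_1$; each $C_n$ is a compact subset of the Hausdorff space $C_1$ and hence closed in $C_1$, so $C$ is an intersection of closed sets, therefore closed in the compact space $C_1$ and thus compact; and $C$ is non-empty because the $C_n$ form a decreasing sequence of non-empty closed subsets of the compact space $C_1$, so they have the finite intersection property and consequently non-empty total intersection.

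The only real content is connectedness, and I would argue by contradiction. Suppose $C = A \sqcup B$ with $A$ and $B$ non-empty, disjoint, and closed in $C$; since $C$ is closed in $C_1$, both $A$ and $B$ are closed in $C_1$ as well. A compact metric space is normal, so there are disjoint open subsets $U \supseteq A$ and $V \supseteq B$ of $C_1$. Then $C \subseteq U \cup V$, which is the same as saying $\bigcap_{n} \bigl(C_n \setminus (U \cup V)\bigr) = \emptyset$.

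Now $\bigl(C_n \setminus (U \cup V)\bigr)_n$ is a decreasing sequence of closed, hence compact, subsets of $C_1$ with empty intersection, so by compactness of $C_1$ there is an index $N$ with $C_N \setminus (U \cup V) = \emptyset$, i.e.\ $C_N \subseteq U \cup V$. Then $U \cap C_N$ and $V \cap C_N$ are disjoint open subsets of $C_N$ covering it, and both are non-empty, since $A \subseteq C \subseteq C_N$ with $A \subseteq U$ gives $\emptyset \neq A \subseteq U \cap C_N$, and symmetrically $\emptyset \neq B \subseteq V \cap C_N$. This contradicts the connectedness of the continuum $C_N$, which finishes the proof. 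The one step requiring genuine care — and the only real obstacle — is precisely this passage from the separation of $C$ to a single $C_N$, which uses normality of $C_1$ together with the compactness argument on the nested sets $C_n \setminus (U \cup V)$; everything else is routine bookkeeping with closed subsets of a compact space.
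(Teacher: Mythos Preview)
Your proof is correct and is the standard argument for this classical fact. Note that the paper itself does not supply a proof of this lemma; it simply quotes it from Nadler's textbook (Theorem~1.8), where essentially the same normality-plus-compactness argument appears.
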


\begin{proposition}
\label{prop-connected}
Let $X$ be a manifold with countable basis of topology and let $\Omega \subset X$ be a relatively compact domain with one end. Then $\Omega$ has connected boundary.
\end{proposition}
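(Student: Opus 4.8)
The plan is to exhibit $\bdry\Omega$ as the intersection of a decreasing sequence of continua and then quote Lemma~\ref{lem-continuum}.

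First I would set up the exhaustion. As an open subset of the manifold $X$, the domain $\Omega$ is itself locally compact, Hausdorff, and second countable, and it is non-compact because it has an end. So Lemma~\ref{lem-exhaust} applies to $\Omega$ and yields an exhaustion by compacts $K_1 \subseteq K_2 \subseteq \cdots$ with $K_j \subseteq \mathrm{int}(K_{j+1})$, $\bigcup_j K_j = \Omega$, and $\Omega \setminus K_j$ connected for every $j$. I then put $C_j := \overline{\Omega \setminus K_j}$, with closure taken in $X$ (equivalently in $\overline{\Omega}$). Each $C_j$ is non-empty (as $\Omega$ is non-compact and $K_j$ compact) and connected, being the closure of the connected set $\Omega \setminus K_j$; it is compact, being closed inside the compact set $\overline{\Omega}$; and it is metrizable, as a compact Hausdorff second countable space, hence as a subspace of a manifold. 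Thus each $C_j$ is a continuum, and $C_{j+1} \subseteq C_j$ because $\Omega \setminus K_{j+1} \subseteq \Omega \setminus K_j$.

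Next I would prove the key identity $\bigcap_j C_j = \bdry\Omega$. For the inclusion $\bdry\Omega \subseteq \bigcap_j C_j$: if $x \in \bdry\Omega = \overline{\Omega}\setminus\Omega$, then $x \notin K_j$ (since $K_j \subseteq \Omega$ and $K_j$ is closed in $X$), so for any neighborhood $V$ of $x$ the set $V \setminus K_j$ is again a neighborhood of $x$ and therefore meets $\Omega$, i.e. $V$ meets $\Omega \setminus K_j$; hence $x \in C_j$ for all $j$. For the reverse inclusion, observe $\bigcup_j \mathrm{int}(K_j) = \Omega$; so if $x \in \Omega$ then $x \in \mathrm{int}(K_m)$ for some $m$, and $\mathrm{int}(K_m)$ is a neighborhood of $x$ disjoint from $\Omega \setminus K_m$, whence $x \notin C_m$. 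Since each $C_j \subseteq \overline{\Omega}$, this gives $\bigcap_j C_j \subseteq \overline{\Omega}\setminus\Omega = \bdry\Omega$. Finally, by Lemma~\ref{lem-continuum} the intersection $\bigcap_j C_j$ of the decreasing sequence of continua $(C_j)_j$ is itself a continuum; as it equals $\bdry\Omega$, the boundary is connected.

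I expect the only delicate point to be this identification $\bdry\Omega = \bigcap_j \overline{\Omega\setminus K_j}$: one must be careful that closures are taken in the ambient manifold and that the nestedness of the exhaustion is used on both sides. Relative compactness of $\Omega$ enters here essentially — it is what makes the sets $C_j$ compact, and hence continua — while the one-end hypothesis is exactly what lets us choose the exhaustion so that the "outer" pieces $\Omega\setminus K_j$ stay connected. Everything else is routine point-set topology together with the two cited lemmas.
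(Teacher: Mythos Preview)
Your proof is correct and follows essentially the same route as the paper: apply Lemma~\ref{lem-exhaust} to get an exhaustion with connected complements, form the decreasing sequence of continua $\overline{\Omega\setminus K_j}$ (the paper writes the slightly different but equivalent $\overline{\Omega}\setminus (K_j)^\circ$), identify the intersection as $\bdry\Omega$, and invoke Lemma~\ref{lem-continuum}. Your version is in fact more carefully argued than the paper's terse proof---in particular you verify the identity $\bigcap_j C_j=\bdry\Omega$ explicitly; the only minor point is that the nestedness condition $K_j\subseteq\mathrm{int}(K_{j+1})$ is not literally part of the statement of Lemma~\ref{lem-exhaust}, but it is standard and can always be arranged by passing to a subsequence.
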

\begin{proof}
We apply Lemma \ref{lem-exhaust} to obtain an exhaustion by compacts $K_j$ such that $\Omega \setminus K_j$ is connected for every $j \in \NN$. Then $\Omega \setminus (K_j)^\circ$ are a decreasing sequence of continua. Thus its limit, which is the boundary $\bdry \Omega$, is also a continuum by Lemma \ref{lem-continuum}.
\end{proof}

\begin{remark}
If the boundary $\bdry \Omega$ of the relatively compact domain $\Omega$ admits a collar, then we have a one-by-one correspondence between ends and boundary components.
\end{remark}

\section{Proofs}
\label{sec:psc}

It was already noted by Serre that a Stein manifold of complex dimension at least two has only one end \cite{MR0064155}*{p.~59}, see also \cite{MR3185220}. His short cohomological argument for this fact is given with more details by Gilligan and Huckleberry \cite{MR0616269}*{p.~186}.

Another way of seeing this is to consider a strictly plurisubharmonic Morse exhaustion function which is the approach taken by Forstneri\v{c} \cite{MR1278436}.

\begin{proof}[Proof of Theorem \ref{thm1}]
Let $\dim_\CC X = n$. Since $\Omega$ is relatively compact, by a small perturbation we may assume that the $\cont^2$-smooth exhaustion function $\varphi \colon \Omega \to [0, +\infty)$ is a Morse function and still $q$-complete. The Morse index at a critical point is at most $n + q - 1$ (see the monograph of Forstneri\v{c} \cite{MR3700709}*{Sections 3.10 and 3.11} for more details). Note that $2n - 1 > n + q - 1 \Longleftrightarrow n > q$ is satisfied by assumption. Since we glue only handles of dimension $\leq n + q - 1 $, and $\Omega$ has real dimension $2n$, the complement of any sublevel set of $\varphi$ is connected. By Lemma \ref{lem-exhaust} the domain $\Omega$ has only one end. Proposition \ref{prop-connected} now gives the conclusion.
\end{proof}


\begin{proof}[Proof of Theorem \ref{thm2}]
Let $\dim_\RR X = 2n$. Let $\varphi \colon \Omega \to \RR$ be a strictly $J$-plurisubharmonic exhaustion function. Since the domain $\Omega$ is relatively compact, we may slightly disturb $\varphi$ if necessary to obtain a strictly $J$-plurisubharmonic Morse exhaustion function. By \cite{MR3012475}*{Corollary 3.4} the Morse index of $\varphi$ at a critical point in an almost complex manifold is at most $n$. Now the conclusion is the same as in the proof of Theorem \ref{thm1} above.
\end{proof}

\begin{remark}
The proofs of these two theorems can also be given without using the theory of ends, by instead considering the exhaustion functions and their connected superlevel sets, and applying Lemma \ref{lem-continuum} directly to them. However, conceptually, it seems more natural to use Proposition \ref{prop-connected} which reflects that the ``reason'' comes from the fact that these relatively compact domains have one end.
\end{remark}

\section*{Acknowledgments}

The author would like to thank Matteo Fiacchi, Franc Forstneri\v{c}, Tobias Harz, Gerrit Hermann, Hendrik Hermann, Alexander Izzo, and Jaka Smrekar for interesting and helpful discussions and suggestions. 

\section*{Funding}

The first author was supported by the European Union (ERC Advanced grant HPDR, 101053085 to Franc Forstneri\v{c}) and grant N1-0237 from ARRS, Republic of Slovenia.

\begin{bibdiv}
\begin{biblist}

\bib{MR3359581}{article}{
   author={Bertrand, Florian},
   author={Gaussier, Herv\'{e}},
   title={Gromov hyperbolicity of strongly pseudoconvex almost complex
   manifolds},
   journal={Proc. Amer. Math. Soc.},
   volume={143},
   date={2015},
   number={9},
   pages={3901--3913},
   issn={0002-9939},
   review={\MR{3359581}},
   doi={10.1090/proc/12564},
}

\bib{MR1793800}{article}{
   author={Balogh, Zolt\'{a}n M.},
   author={Bonk, Mario},
   title={Gromov hyperbolicity and the Kobayashi metric on strictly
   pseudoconvex domains},
   journal={Comment. Math. Helv.},
   volume={75},
   date={2000},
   number={3},
   pages={504--533},
   issn={0010-2571},
   review={\MR{1793800}},
   doi={10.1007/s000140050138},
}

\bib{MR3012475}{book}{
   author={Cieliebak, Kai},
   author={Eliashberg, Yakov},
   title={From Stein to Weinstein and back},
   series={American Mathematical Society Colloquium Publications},
   volume={59},
   note={Symplectic geometry of affine complex manifolds},
   publisher={American Mathematical Society, Providence, RI},
   date={2012},
   pages={xii+364},
   isbn={978-0-8218-8533-8},
   review={\MR{3012475}},
   doi={10.1090/coll/059},
}

\bib{MR2855300}{article}{
   author={Czarnecki, Andrzej},
   author={Kulczycki, Marcin},
   author={Lubawski, Wojciech},
   title={On the connectedness of boundary and complement for domains},
   journal={Ann. Polon. Math.},
   volume={103},
   date={2012},
   number={2},
   pages={189--191},
   issn={0066-2216},
   review={\MR{2855300}},
   doi={10.4064/ap103-2-6},
}

\bib{MR2492398}{article}{
   author={Diederich, Klas},
   author={Sukhov, Alexandre},
   title={Plurisubharmonic exhaustion functions and almost complex Stein
   structures},
   journal={Michigan Math. J.},
   volume={56},
   date={2008},
   number={2},
   pages={331--355},
   issn={0026-2285},
   review={\MR{2492398}},
   doi={10.1307/mmj/1224783517},
}

\bib{MR0672405}{article}{
   author={Diederich, Klas},
   author={Forn\ae ss, John Erik},
   title={A smooth pseudoconvex domain without pseudoconvex exhaustion},
   journal={Manuscripta Math.},
   volume={39},
   date={1982},
   number={1},
   pages={119--123},
   issn={0025-2611},
   review={\MR{0672405}},
   doi={10.1007/BF01312449},
}

\bib{MR1278436}{article}{
   author={Forstneri\v{c}, Franc},
   title={Complements of Runge domains and holomorphic hulls},
   journal={Michigan Math. J.},
   volume={41},
   date={1994},
   number={2},
   pages={297--308},
   issn={0026-2285},
   review={\MR{1278436}},
   doi={10.1307/mmj/1029004997},
}

\bib{MR1091622}{article}{
   author={McDuff, Dusa},
   title={Symplectic manifolds with contact type boundaries},
   journal={Invent. Math.},
   volume={103},
   date={1991},
   number={3},
   pages={651--671},
   issn={0020-9910},
   review={\MR{1091622}},
   doi={10.1007/BF01239530},
}

\bib{MR3700709}{book}{
   author={Forstneri\v{c}, Franc},
   title={Stein manifolds and holomorphic mappings},
   series={Ergebnisse der Mathematik und ihrer Grenzgebiete. 3. Folge. A
   Series of Modern Surveys in Mathematics [Results in Mathematics and
   Related Areas. 3rd Series. A Series of Modern Surveys in Mathematics]},
   volume={56},
   edition={2},
   note={The homotopy principle in complex analysis},
   publisher={Springer, Cham},
   date={2017},
   pages={xiv+562},
   isbn={978-3-319-61057-3},
   isbn={978-3-319-61058-0},
   review={\MR{3700709}},
   doi={10.1007/978-3-319-61058-0},
}

\bib{MR1545233}{article}{
   author={Freudenthal, Hans},
   title={\"{U}ber die Enden topologischer R\"{a}ume und Gruppen},
   language={German},
   journal={Math. Z.},
   volume={33},
   date={1931},
   number={1},
   pages={692--713},
   issn={0025-5874},
   review={\MR{1545233}},
   doi={10.1007/BF01174375},
}

\bib{MR1328705}{article}{
   author={Geiges, Hansj\"org},
   title={Examples of symplectic $4$-manifolds with disconnected boundary of
   contact type},
   journal={Bull. London Math. Soc.},
   volume={27},
   date={1995},
   number={3},
   pages={278--280},
   issn={0024-6093},
   review={\MR{1328705}},
   doi={10.1112/blms/27.3.278},
}

\bib{MR0616269}{article}{
   author={Gilligan, Bruce},
   author={Huckleberry, Alan T.},
   title={Complex homogeneous manifolds with two ends},
   journal={Michigan Math. J.},
   volume={28},
   date={1981},
   number={2},
   pages={183--198},
   issn={0026-2285},
   review={\MR{0616269}},
}

\bib{MR0098847}{article}{
   author={Grauert, Hans},
   title={On Levi's problem and the imbedding of real-analytic manifolds},
   journal={Ann. of Math. (2)},
   volume={68},
   date={1958},
   pages={460--472},
   issn={0003-486X},
   review={\MR{0098847}},
   doi={10.2307/1970257},
}

\bib{MR0330510}{article}{
   author={Harvey, F. Reese},
   author={Wells, R. O., Jr.},
   title={Zero sets of non-negative strictly plurisubharmonic functions},
   journal={Math. Ann.},
   volume={201},
   date={1973},
   pages={165--170},
   issn={0025-5831},
   review={\MR{0330510}},
   doi={10.1007/BF01359794},
}

\bib{MR1410261}{book}{
   author={Hughes, Bruce},
   author={Ranicki, Andrew},
   title={Ends of complexes},
   series={Cambridge Tracts in Mathematics},
   volume={123},
   publisher={Cambridge University Press, Cambridge},
   date={1996},
   pages={xxvi+353},
   isbn={0-521-57625-3},
   review={\MR{1410261}},
   doi={10.1017/CBO9780511526299},
}

\bib{Izzo}{article}{
   author={Izzo, Alexander J.},
   title={Every connected bounded domain of holomorphy has connected boundary},
   eprint={arXiv:2308.08551},
   year={2023},
}

\bib{MR4201928}{book}{
   author={Jarnicki, Marek},
   author={Pflug, Peter},
   title={Extension of holomorphic functions},
   series={De Gruyter Expositions in Mathematics},
   volume={34},
   edition={extended edition},
   publisher={De Gruyter, Berlin},
   date={2020},
   pages={xii+572},
   isbn={978-3-11-063027-5},
   isbn={978-3-11-062766-4},
   isbn={978-3-11-062769-5},
   review={\MR{4201928}},
   doi={10.1515/9783110630275},
}

\bib{MR1192552}{book}{
   author={Nadler, Sam B., Jr.},
   title={Continuum theory},
   series={Monographs and Textbooks in Pure and Applied Mathematics},
   volume={158},
   note={An introduction},
   publisher={Marcel Dekker, Inc., New York},
   date={1992},
   pages={xiv+328},
   isbn={0-8247-8659-9},
   review={\MR{1192552}},
}

\bib{MR0964461}{article}{
   author={Rosay, Jean-Pierre},
   author={Stout, Edgar Lee},
   title={Rad\'o's theorem for CR-functions},
   journal={Proc. Amer. Math. Soc.},
   volume={106},
   date={1989},
   number={4},
   pages={1017--1026},
   issn={0002-9939},
   review={\MR{0964461}},
   doi={10.2307/2047287},
}

\bib{MR0064155}{article}{
   author={Serre, Jean-Pierre},
   title={Quelques probl\`emes globaux relatifs aux vari\'{e}t\'{e}s de
   Stein},
   language={French},
   conference={
      title={Colloque sur les fonctions de plusieurs variables, tenu \`a
      Bruxelles, 1953},
   },
   book={
      publisher={Georges Thone, Li\`ege},
   },
   date={1953},
   pages={57--68},
   review={\MR{0064155}},
}

\bib{MR3185220}{book}{
   author={Serre, Jean-Pierre},
   title={Oeuvres/Collected papers. I. 1949--1959},
   language={French},
   series={Springer Collected Works in Mathematics},
   note={Reprint of the 2003 edition [of the 1986 original, MR0926689]},
   publisher={Springer, Heidelberg},
   date={2013},
   pages={xxiv+596},
   isbn={978-3-642-39815-5},
   review={\MR{3185220}},
}

\bib{MR2305474}{book}{
   author={Stout, Edgar Lee},
   title={Polynomial convexity},
   series={Progress in Mathematics},
   volume={261},
   publisher={Birkh\"{a}user Boston, Inc., Boston, MA},
   date={2007},
   pages={xii+439},
   isbn={978-0-8176-4537-3},
   isbn={0-8176-4537-3},
   review={\MR{2305474}},
   doi={10.1007/978-0-8176-4538-0},
}

\bib{zbMATH03097391}{article}{
 author = {Schur, Issai},
 title = {Ein {Satz} {\"u}ber quadratische {Formen} mit komplexen {Koeffizienten}},
 journal = {Am. J. Math.},
 issn = {0002-9327},
 volume = {67},
 pages = {472--480},
 year = {1945},
 doi = {10.2307/2371974},
}

\bib{zbMATH02595788}{article}{
 author = {Takagi, Teiji},
 title = {On an algebraic problem related to an analytic theorem of {Carath{\'e}odory} and {Fej{\'e}r} and on an allied theorem of {Landau}.},
 journal = {Jpn. J. Math.},
 volume = {1},
 year = {1924},
 doi = {10.11429/ppmsj1919.6.10\_130},
}


\end{biblist}
\end{bibdiv}

\end{document}